\title{A compact moduli of orbifold projective curves}
\author{Tarig Abdelgadir}
\address{Mathematical Sciences \\
Loughborough University,
LE11 3TU,
United Kingdom
}
\email{t.abdelgadir@lboro.ac.uk}
\author{Daniel Chan}
\address{School of Mathematics and Statistics, 
UNSW Sydney, 
NSW 2052,
Australia
}
\email{danielc@unsw.edu.au}
\author{Shinnosuke Okawa}
\address{
Department of Mathematics,
Graduate School of Science,
Osaka University,
Machikaneyama 1-1,
Toyonaka,
Osaka,
560-0043,
Japan.
}
\email{okawa@math.sci.osaka-u.ac.jp}
\author[K.~Ueda]{Kazushi Ueda}
\address{
Graduate School of Mathematical Sciences,
The University of Tokyo,
3-8-1 Komaba,
Meguro-ku,
Tokyo,
153-8914,
Japan.}
\email{kazushi@ms.u-tokyo.ac.jp}
\begin{document}

\begin{abstract}
We introduce the notion of stable orbifold projective curves,
and show that the moduli stack of stable orbifold projective curves
is isomorphic to the moduli stack of weighted pointed stable curves
in the sense of Hassett
with respect to the weights
determined by the automorphism groups
of the stacky points.
\end{abstract}

\maketitle

% \setcounter{tocdepth}{1}
% \tableofcontents

\section{Introduction} \label{sc:introduction}

The problem of moduli of curves has a long history
which goes back at least to Riemann.
The set of isomorphism classes of compact Riemann surfaces
can be given a natural complex (or even algebraic) structure
to form the moduli space.
The notion of orbifold is introduced by Satake
(under the name `V-manifold';
the word `orbifold' was invented by Thurston)
in order to formulate the precise sense
in which the moduli spaces
(or more generally quotients of manifolds
by properly discontinuous group actions,
of which the moduli space of curves is a prototypical example)
are `spaces'.
In algebraic geometry,
the notion of stacks is introduced
by Grothendieck
and developed by Artin
to describe such structures.
Deligne and Mumford
\cite{MR262240}
showed
that the moduli stack $\cM_g$ of smooth projective curves
of genus $g \ge 2$
belongs to a particularly nice class of algebraic stacks,
which are now called Deligne--Mumford stacks.

The stack $\cM_g$ is
a non-proper open substack
of the unbounded non-separated moduli stack
$\cU_g$ of proper connected reduced one-dimensional schemes
of arithmetic genus $g$,
and
a proper open substack of $\cU_g$
containing $\cM_g$ as an open dense substack
is called a \emph{modular compactification} of $\cM_g$.
Deligne and Mumford gave a modular compactification $\cMbar_g$,
which is canonical
from the point of view of the minimal model program;
a curve classified by $\cMbar_g$ has at worst nodal singularities
and a finite automorphism group,
implying ampleness of the dualizing sheaf.

In this paper,
we study the moduli stack of one-dimensional stacks.
To define ampleness of an invertible sheaf on a stack,
we use a closed immersion
to a \emph{weighted projective space},
defined  as
% By a \emph{weighted projective space},
% we will always mean
the quotient stack of the complement of the origin
of the affine space
by a positive $\Gm$-action.
% As a stack analog of a weighted projective varieties
% (cf.~e.g.~\cite{MR704986}),
% a closed substack of a weighted projective space
% will be called a \emph{weighted projective stack}
% in this paper.
The automorphism group
of any point
of such a stack
is
the group scheme $\bmu_r$
of $r$-th roots of unity.
We also assume that the automorphism group of the generic point
of any irreducible component is trivial.
Since $\bmu_r$ is not smooth
in characteristics dividing $r$,
a closed substack of a weighted projective space
is not necessarily Deligne--Mumford,
but always \emph{tame}
in the sense of \cite{MR2427954}.
The idea of using embeddings to weighted projective spaces
to study moduli of stacks
goes back at least to \cite{MR2779465},
although the main focus of \cite{MR2779465}
is on varieties of general type
and stacks appear as the stacky Proj of their canonical rings,
whereas we consider stacks which do not come from the canonical rings of varieties.
% \cite{MR2819757}

A one-dimensional closed substack
of a weighted projective space
whose automorphism group at the generic point
of any irreducible component
is trivial will be called
an \emph{orbifold projective curve}
in this paper.
A smooth orbifold projective curve $\bX$ has a numerical invariant
$(g,\bsr)$ consisting of the genus $g$
of the coarse moduli scheme
and the multiset
\begin{align}
  \bsr= \lc \overbrace{ { r }_{ 1 }, \dots, { r }_{ 1 } } ^{ d _{ 1 } },
  \overbrace{ { r }_{ 2 }, \dots, { r }_{ 2 } } ^{ d _{ 2 } },
  \dots,
  \overbrace{ { r }_{ \ell }, \dots, { r }_{ \ell } } ^{ d _{ \ell } }
  \rc
\end{align}
consisting of the orders $r_i$ 
of the automorphism groups of the stacky points.
If $X$ denotes the coarse moduli scheme of $\bX$,
then for $m$ divisible by all the $r_i$,
the sheaf $\omega_{\bX}^{\otimes m}$ is the pullback of $\omega_X^{\otimes m}(m\Delta)$,
where $\Delta$ is the divisor supported at stacky points with standard coefficients $1-r_i^{-1}$.
This suggests considering the pair $(X,\Delta)$
as a weighted pointed curve
in the sense of Hassett \cite{MR1957831},
the weights here being the coefficients of $\Delta$.

Pursuing this connection,
we start with the ``non-weighted'' case
where $\Delta$ is replaced with its round-up $\lceil \Delta \rceil$.
The moduli stack $\cM_{g,\bsr}$ of smooth orbifold projective curves
with the given numerical invariant
can naturally be identified
with the quotient
$
\ld \cM_{g,n} \middle/ \frakS_{\bsd} \rd
$
where
$
\bsd = (d_1,\ldots,d_\ell),
$
$
n=|\bsd| \coloneqq d_1 + \cdots + d_\ell,
$
and
$
\frakS_\bsd = \prod_{i=1}^\ell \frakS_{d_i}.
$
An obvious compactification of
$
\ld \cM_{g,n} \middle/ \frakS_{\bsd} \rd
$
is given by
$
\ld \cMbar_{g,n} \middle/ \frakS_{\bsd} \rd
$.
The universal family
$
\lb
\cC
%  \to \cMbar_{g,n}
,
\lb
s_i
%  \colon \cMbar_{g,n} \to \cC
\rb_{i=1}^n \rb
$
of stable $n$-pointed curves of genus $g$
produces a family
of orbifold projective curves
over $\cMbar_{g,n}$
by the root construction along the images of the sections $s_i$,
which descends to a family
$
\cX \to
\ld \cMbar_{g,n} \middle/ \frakS_\bsd \rd
$
over the quotient.
By taking the stacky Proj
of the relative canonical ring,
one obtains a family of the stacky canonical models
of the universal family
over
$
\ld \cMbar_{g,n} \middle/ \frakS_\bsd \rd.
$

In
\pref{df:stable orbifold projective curve},
we introduce the notion of a
\emph{stable orbifold projective curve}.
The main result of this paper is
\pref{th:main},
which identifies
the moduli stack of stable orbifold projective curves
with (an unordered variant of)
Hassett's moduli stack of weighted pointed stable curves $(X,\Delta)$,
so that the family of stacky canonical models
constructed above
is the universal family
of stable orbifold projective curves.
This is parallel to the behavior of Hassett's moduli stacks
under the reduction of coefficients
from $\lceil \Delta \rceil$ to $\Delta$. 
It is an interesting problem
to find other modular compactifications,
possibly from the point of view
of minimal model theory for the coarse moduli spaces
(cf.~e.g.~\cite{MR3184168} and references therein).

This paper was originally conceived as a part of \cite{ACOU2},
which bubbled off as a `purely commutative' irreducible component.
The result of \cite{ACOU2} gives yet another modular interpretation
to $\cM_{g,\bsr}$
in such a way that when a pair of stacky points collide,
one obtains not a stacky node
but a `noncommutative point'
which is still `smooth'
(i.e., has finite homological dimension).

\subsection*{Acknowledgements}
During the preparation of this work,
D.~C.~was partially supported by the Australian Research Council Discovery Project grant DP220102861, 
S.~O.~was partially supported by
JSPS Grants-in-Aid for Scientific Research
(16H05994,
16H02141,
16H06337,
18H01120,
20H01797,
20H01794),
and
K.~U.~was partially supported by
JSPS Grants-in-Aid for Scientific Research
(21K18575).

\section{Ample invertible sheaves on algebraic stacks} \label{sc:ample}

\subsection{Assumptions and notations}

Let
$
\pi \colon \bX \to \basescheme
$
be an algebraic stack,
which is
flat,
proper,
and
locally of finite type
over a locally Noetherian scheme
$
\basescheme.
$
Assume that the inertia group stack
% $
% \bI \to \bX
% $
is finite,
so that there exists
the coarse moduli algebraic space morphism
$
c \colon \bX \to X
$
by \cite{MR1432041}.
The stack $\bX$ is said to be \emph{cyclotomic}
if the automorphism group of any geometric point
is isomorphic to $\bmu_r$
for some $r$
\cite[Definition 2.3.1]{MR2779465}.
The stack $\bX$ is said to be \emph{tame}
if the functor
$
c_* \colon \Qcoh \bX \to \Qcoh X
$
is exact
\cite[Definition 3.1]{MR2427954}.
This is the case if and only if
the automorphism group scheme
of any geometric point of $\bX$
is finite and linearly reductive
by \cite[Theorem~3.2]{MR2427954}.
Let
$
\pibar \colon X \to B
$
be the morphism
factoring $\pi$ as
$
\pibar \circ c.
$
Let further $L$ be an invertible sheaf on $\bX$.
We will keep these assumptions and notations
through the rest of this section.

\subsection{Definition of ampleness}

In this paper,
we use the following terminology:

\begin{definition}\label{df:coarsely ample}
An invertible sheaf $L$ is \emph{coarsely ample} if
there exists
a positive integer $N$
such that $L^{\otimes N}$ descends
to a $\pibar$-ample invertible sheaf on $X$.
% (i.e.,
% there exists 
% a $\pibar$-ample invertible sheaf $L'$ on $X$
% such that
% $
% L ^{ \otimes N } \simeq c ^{ \ast } L '
% $
% )
\end{definition}

\begin{remark} \label{rm:existence of descent}
If
the automorphism group of any geometric point
is isomorphic to $ \bmu _{ r }$
for some $ r $
bounded on $\bX$
(which is the case if $\basescheme$ is Noetherian),
then for a sufficiently divisible $ N > 0 $,
the $N$-th tensor power of any invertible sheaf $L$
descends to $X$.
\end{remark}

Let
$
\Sym(L) \coloneqq \bigoplus_{i=0}^\infty L^{\otimes i}
$
be the symmetric algebra,
which is a sheaf of $\bN$-graded $\cO_\bX$-algebras.
The relative spectrum
$
\bL \coloneqq \cSpec \Sym(L)
$
over $\bX$
is the total space of the line bundle on $\bX$ associated with $L$.
The $\bN$-grading on $\Sym(L)$
gives the $\Gm$-action on $\bL$
by fiberwise dilation.

\begin{lemma}
If an invertible sheaf $L$ is coarsely ample,
then the sheaf
$\pi_* \Sym(L)$
of $\bN$-graded $\cO_B$-algebras
is of finite type.
\end{lemma}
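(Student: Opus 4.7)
The plan is to reduce the statement to the classical fact that on a projective scheme over a Noetherian base, the section ring of a relatively ample line bundle is a finitely generated algebra, and then to bootstrap from a Veronese subalgebra to the full algebra.

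First I would unwind the definition of coarse ampleness: pick $N>0$ and a $\bar\pi$-ample invertible sheaf $M$ on $X$ with $c^* M \cong L^{\otimes N}$. Since $\bX$ is tame, the pushforward $c_*$ is exact, and moreover $c_*\cO_\bX = \cO_X$ (a standard consequence of finite inertia plus the definition of the coarse moduli space). Applying the projection formula to $c$, which is proper, gives
\begin{align}
c_* L^{\otimes iN} \cong c_*(c^* M^{\otimes i}) \cong M^{\otimes i} \otimes c_*\cO_\bX \cong M^{\otimes i},
\end{align}
so that $\pi_* L^{\otimes iN} \cong \bar\pi_* M^{\otimes i}$ for all $i \ge 0$. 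Since $M$ is $\bar\pi$-ample and $\bar\pi$ is proper into the locally Noetherian base $B$, the graded $\cO_B$-algebra $\bigoplus_{i \ge 0} \bar\pi_* M^{\otimes i}$ is of finite type by the standard relative version of the theorem on finite generation of section rings of ample invertible sheaves. This shows that the $N$-th Veronese subalgebra of $\pi_*\Sym(L)$ is already a finitely generated $\cO_B$-algebra.

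To pass from this Veronese subalgebra to the whole algebra, I would decompose
\begin{align}
\pi_* \Sym(L) \;=\; \bigoplus_{k=0}^{N-1}\; \bigoplus_{i \ge 0} \pi_*\!\lb L^{\otimes k} \otimes L^{\otimes iN}\rb.
\end{align}
For each fixed $0 \le k < N$, the projection formula applied twice gives
\begin{align}
\pi_*\!\lb L^{\otimes k} \otimes c^* M^{\otimes i}\rb \cong \bar\pi_*\!\lb c_* L^{\otimes k} \otimes M^{\otimes i}\rb.
\end{align}
The sheaf $\cF_k \coloneqq c_* L^{\otimes k}$ is coherent on $X$ because $c$ is proper and $\bX$ is locally Noetherian, and then the standard ample-bundle fact states that $\bigoplus_{i \ge 0} \bar\pi_*(\cF_k \otimes M^{\otimes i})$ is a finitely generated module over $\bigoplus_{i \ge 0} \bar\pi_* M^{\otimes i}$. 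Thus each residue class mod $N$ is finitely generated over the Veronese subalgebra, and so $\pi_* \Sym(L)$ is a finitely generated $\cO_B$-algebra.

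I expect the only real subtlety to be the justification of the projection formula and of $c_*\cO_\bX = \cO_X$ in the tame but not necessarily Deligne--Mumford setting; these are exactly what tameness (i.e.\ exactness of $c_*$, from \cite{MR2427954}) is designed to give, so no separate argument is needed. Everything else reduces to properness of $c$ and $\bar\pi$ together with the classical relative ampleness formalism on the algebraic space $X$.
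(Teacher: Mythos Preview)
Your argument is correct and is exactly the standard one: descend a power of $L$, use the projection formula to identify the $N$-th Veronese with the section ring of an ample sheaf on the coarse space, and then show each residue class is a finite module over it via $\cF_k = c_* L^{\otimes k}$. The paper itself states this lemma without proof, treating it as routine; your write-up supplies precisely the details one would expect, including the one nontrivial point (that tameness gives $c_*\cO_{\bX}\cong\cO_X$ and exactness, hence the projection formula), so there is nothing to compare against.
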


The relative spectrum
$
\cSpec \pi_* \Sym(L) 
$
% of the relative section ring
is the affinization of $\bL$ relative to $B$.
The structure morphism
$
\bL \to \cSpec \pi_* \Sym(L)
$
is $\Gm$-equivariant
and hence defines a morphism
\begin{align}
  \phi_L
  \colon
  \ld \bL \middle/ \Gm \rd
  \to
  \ld \cSpec \pi_* \Sym(L) \middle/ \Gm \rd
\end{align}
of quotient stacks.
Recall that
for an $\bN$-graded $\cO_B$-algebra
$
\cA = \bigoplus_{i=0}^\infty \cA_i,
$
the stack
\begin{align}
  \bProj \cA
  \coloneqq
  \ld \lb \cSpec \cA \setminus \bszero_{\cSpec \cA} \rb \middle/ \Gm \rd
\end{align}
is the quotient of the complement
the closed subscheme
$
\bszero_{\cSpec \cA}
$
defined by the irrelevant ideal
$
\cA_+ \coloneqq \bigoplus_{i=1}^\infty \cA_i.
$
The coarse ampleness of $L$ implies that
the restriction of $\phi_L$
to the complement of 
(the $\Gm$-quotient of)
the zero section
$\bszero_{\bL}$
factors as the composite
$
\iota \circ \phi_L
$
of a morphism
\begin{align} \label{eq:varphiL}
  \varphi_L
  \colon
  \bX
  \simeq
  \ld \lb \bL \setminus \bszero_\bL \rb \middle/ \Gm \rd
  \to
  \bProj \pi_* \Sym(L)
\end{align}  
and the open immersion
\begin{align}
  \iota
  \colon
  \bProj \pi_* \Sym(L)
  % \coloneqq
  % \ld \lb \cSpec \pi_* \Sym(L) \setminus \bszero_{\cSpec \pi_* \Sym(L)} \rb \middle/ \Gm \rd
  \hookrightarrow
  \ld \cSpec \pi_* \Sym(L) \middle/ \Gm \rd.
\end{align}

\begin{definition} \label{df:ample}
An invertible sheaf $L$ is \emph{ample}
if it is coarsely ample and
the morphism \eqref{eq:varphiL}
is an isomorphism.
\end{definition}

\begin{remark}
A stack is cyclotomic
if it admits an ample invertible sheaf.
\end{remark}

\subsection{Ampleness criterion after Abramovich and Hassett}

\begin{definition}[{\cite[Definition~2.4.1]{MR2779465}, cf.~also \cite[Definition~2.7]{MR2819757}}] \label{df:orbi-ample}
An invertible sheaf $L$
on a cyclotomic stack $\bX$
is \emph{polarizing}
if
\begin{itemize}
  \item 
  $L$ is coarsely ample, and
  \item
  for any geometric point $x$ of $\bX$,
  the character associated with the stalk $L_x$
  generates
  the finite cyclic group
  $
  \Hom \lb \Aut x, \Gm \rb.
  $
\end{itemize}
\end{definition}

\pref{pr:ampleness criterion} below
is a slight variant of
\cite[Corollary 2.4.4]{MR2779465}
(cf.~also \cite[Proposition~2.11]{MR2819757}),
whose proof we include for the sake of completeness.
  
\begin{proposition} \label{pr:ampleness criterion}
An invertible sheaf $L$ on a cyclotomic stack $\bX$ is ample
if and only it is polarizing.
\end{proposition}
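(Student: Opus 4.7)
The two directions are argued separately; since \pref{df:coarsely ample} is contained in \pref{df:orbi-ample}, coarse ampleness can be assumed throughout, and the content is to compare the character condition in \pref{df:orbi-ample} with the requirement that $\varphi_L$ be an isomorphism. Both conditions and the formation of $\varphi_L$ are compatible with étale localization on $B$, so after shrinking, tameness plus the cyclotomic hypothesis provide an étale chart in which $\bX \simeq \ld U \middle/ \bmu_r \rd$ with $U$ affine, on which $L$ corresponds to a character $\chi \colon \bmu_r \to \Gm$. In this local model $\bL \simeq \ld U \times \mathbb{A}^1 \middle/ \bmu_r \rd$ with $\bmu_r$ acting diagonally (by its given action on $U$ and via $\chi$ on $\mathbb{A}^1$), and $\cSpec \pi_* \Sym L$ is the coarse scheme quotient $\lb U \times \mathbb{A}^1 \rb / \bmu_r$ equipped with the $\Gm$-action coming from the $\bN$-grading (scaling $\mathbb{A}^1$).

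For polarizing implies ample, the hypothesis is that $\chi$ is injective. Then at any geometric point of $U$ with $\bmu_r$-stabilizer $H$, the stabilizer of any lift $\lb u, t \rb \in U \times \lb \mathbb{A}^1 \setminus 0 \rb$ is $H \cap \ker \chi$, which is trivial, so $\bmu_r$ acts freely on $U \times \lb \mathbb{A}^1 \setminus 0 \rb$ and the coarse $\bmu_r$-quotient coincides with the geometric quotient. The residual $\Gm$-action on this quotient has isotropy group $\chi \lb H \rb$ at the class of $\lb u, t \rb$, and the injectivity of $\chi$ identifies this with $H$. These isotropy groups match those of $\bX$, and coarse ampleness guarantees that the underlying coarse moduli spaces agree, so $\varphi_L$ is an isomorphism.

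For ample implies polarizing, suppose for contradiction that at some geometric point $\bar x$ with $\Aut \bar x \simeq \bmu_r$ the character $\chi$ of $L_{\bar x}$ has nontrivial kernel $K$. In the local model just described, the $\Gm$-isotropy group of the image of $\bar x$ in $\bProj \pi_* \Sym L$ is $\chi \lb \bmu_r \rb \simeq \bmu_r / K$, strictly smaller than $\bmu_r = \Aut \bar x$. Hence $\varphi_L$ fails to induce an isomorphism on inertia, contradicting the hypothesis that $L$ is ample.

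The principal obstacle is the verification of the local description of $\bProj \pi_* \Sym L$ used above — specifically, that when $\bmu_r$ acts freely on $U \times \lb \mathbb{A}^1 \setminus 0 \rb$, passing through the scheme-theoretic coarse quotient underlying $\cSpec \pi_* \Sym L$ and then taking the $\Gm$-quotient of its non-origin part genuinely recovers the stacky quotient $\ld U \middle/ \bmu_r \rd$ with the correct isotropy. Once this toric identification is in place, both directions reduce to the stabilizer comparisons given above.
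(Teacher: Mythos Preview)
Your approach is essentially the same as the paper's: reduce to a local presentation $[\Spec S/\bmu_r]$, identify $\Spec R(L)$ with $(\Spec S \times \bA^1)/\bmu_r$, and use freeness of the $\bmu_r$-action on $\Spec S \times \Gm$ (equivalent to the character being a generator) to conclude $\bProj R \simeq [\Spec S/\bmu_r]$. One correction: the localization must be performed on the coarse moduli space $X$, not on the base $B$. The local structure theorem for tame stacks furnishes $[W/G]$ over an \'etale neighborhood of a closed point of $X$; the paper then passes to the strict henselization of $\cO_{X,x}$ to make the relevant rings explicit, which simultaneously handles your ``principal obstacle'' by writing $R = \bigoplus_i S_{[i]}$ directly. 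Also, your argument for the converse direction (ample $\Rightarrow$ polarizing) via comparison of isotropy groups is a useful addition, since the paper's written proof only treats the forward implication explicitly.
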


\begin{proof}
Take a very ample invertible sheaf $\Lbar$
on the coarse moduli space $ X $ and $ d > 0 $
such that $ L ^{ \otimes d } \simeq c ^{ \ast } \Lbar $.
Then one has
\begin{align}
X
\simeq \Proj R \lb \Lbar \rb
\simeq \Proj R \lb L^{\otimes d} \rb
\simeq \Proj R \lb L \rb
\end{align}
where
$
R \lb \Lbar \rb \coloneqq \pibar_* \Sym \lb \Lbar \rb
$
and so on,
so that $\bX$ and $\bProj R (L)$
have isomorphic coarse moduli spaces and
we have the commutative diagram:
\begin{equation}
\begin{tikzcd}
\bX \arrow[r, "\varphi _{ L }"] \arrow[d,"c"'] & \bProj R ( L ) \arrow[d,"\pi"]\\
X \arrow[r,"\simeq"] & \Proj R ( L )
\end{tikzcd}
\end{equation}

By \cite[Theorem~3.2]{MR2427954},
for any closed point $ x \in X $,
there is an \'etale neighborhood $ V \to X $ such that
$
\bX \times _{ X } V
$
is isomorphic to the quotient stack
$
\left[ W / G \right]
$
for some finite $V$-scheme $ W $ and
a finite linearly reductive $ V $-group scheme $ G $ acting on $ W $.
Let $ v \in V $ be a point mapping to $ x $, and let $ A $ be the strict henselization of the local ring
$
\cO_v.
$
In order to prove that
$\varphi_L$ is an isomorphism,
it suffices to prove that
the base change $\lb \varphi_L \rb_A$
of $ \varphi _{ L } $ by $ \Spec A \to X $ is an isomorphism.

We write
$
\Spec S = W \times_V \Spec A
$
and
$
\bX_A = \bX \times_V \Spec A.
$
By our assumption
on the automorphism group scheme of geometric points of $ \bX $
and \cite[Lemma 2.17]{MR2427954},
we have
$
G \times_V \Spec A
\cong \bmu_r
$
for some positive integer $r$.
By our assumption on $L$,
there is a generator $\chi$ of $\Hom(\bmu_r, \Gm)$
such that
$
L|_{\bX_A}
$
is identified with the $\Hom(\bmu_r, \Gm)$-graded $ S $-module $ S ( \chi ) $.
We may choose an isomorphism
$
\Hom(\bmu_r, \Gm) \simeq \bZ / r \bZ
$
so that $ \chi $ corresponds to $ [1] \in \bZ / r \bZ$.
Then
$
S = \bigoplus_{i=0}^{r-1} S_{[i]}
$
is a $\bZ / r \bZ$-graded algebra and
$
L \vert _{ \bX_A }
$
is identified with the graded $S$-module $ S ([1])$.
We have
$
 R \coloneqq R \lb L|_{\bX_A} \rb
 = \bigoplus_{i=0}^\infty R_i
$
with
$
R_i = S_{[i]},
$
so that
\begin{align}
  \Spec R \simeq \lb \Spec S \times \bA^1 \rb / \bmu_r
\end{align}
where
$\bmu_r$ acts on $\bA^1$ through the character $\chi$.
Since the restriction of this action of $\bmu_r$ to
$
\Gm \simeq \bA^1 \setminus \bszero_{\bA^1}
$
is free
and
$
\bszero_{\Spec R} \simeq \Spec S \times \bszero_{\bA^1},
$
we have an isomorphism
\begin{align}
\bProj R
&\coloneqq \ld \lb \Spec R \setminus \bszero_{\Spec R} \rb \middle/ \Gm \rd \\
&\simeq \ld \lb \Spec S \times \Gm \rb \middle/ \lb \bmu_r \times \Gm \rb \rd \\
&\simto \ld \Spec S \middle/ \bmu_r \rd \\
&\simeq \bX_A,
\end{align}
which is inverse to
$
\lb \varphi_L \rb_A.
$
\end{proof}

\subsection{Ampleness criterion after Artin and Zhang}

We give \pref{th:AZ-ample} below
to justify \pref{df:ample};
it will not be used in the proof of \pref{th:main}.

\begin{definition}[{\cite[(4.2.1)]{MR1304753}}]  \label{df:AZ-ample}
An invertible sheaf $L$ is \emph{AZ-ample}
if
\begin{enumerate}[(a)]
\item
for any
$
\cM \in \coh \bX,
$
there exist
$
p \in \bN,
$
$
\ell _{ 1 }, \dots, \ell _{ p } \in \bZ,
$
and an epimorphism
$
\bigoplus _{ i = 1 } ^{ p } L ^{ \otimes ( - \ell _{ i } ) } \to \cM,
$
and
\item
for any epimorphism
$
f \colon \cM \to \cN
$
in $\coh \bX$,
there exists $ n _{ 0 } \in \bZ$
such that
$
\pi _{ \ast } ( \cM \otimes L ^{ \otimes n } )
\to
\pi _{ \ast  }( \cN \otimes L ^{ \otimes n } )
$
is surjective for any $ n \ge n _{ 0 } $.
\end{enumerate}
\end{definition}

\begin{theorem} \label{th:AZ-ample}
An invertible sheaf $L$ is ample
if and only if it is AZ-ample.
\end{theorem}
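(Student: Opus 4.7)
The plan is to derive each implication from the classical Artin--Zhang ampleness criterion applied to the coarse moduli space $X$, combined with the \'etale-local presentation $\ld \Spec S / \bmu_r \rd$ already exploited in the proof of \pref{pr:ampleness criterion}. Since both notions are local on $B$, I would first reduce to $B = \Spec A$ with $A$ Noetherian, and fix $N > 0$ as in \pref{rm:existence of descent} so that $L^{\otimes N}$ descends to an invertible sheaf $\cL$ on $X$.

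\emph{Ample implies AZ-ample.} Use the isomorphism $\varphi_L \colon \bX \simto \bProj R$ of \pref{df:ample}, where $R = \pi_* \Sym(L)$ is finitely generated over $\cO_B$ by the previous lemma; under this identification $L$ corresponds to the tautological twist $\cO_{\bProj R}(1)$. Then axiom (a) reduces to the standard fact that a coherent sheaf on $\bProj R$ comes from a finitely generated $\bN$-graded $R$-module, which admits a surjection from a finite direct sum $\bigoplus R(-\ell_i)$. Axiom (b) amounts to the vanishing $R^1 \pi_*(\cK \otimes L^{\otimes n}) = 0$ for $n \gg 0$, where $\cK$ is the kernel of the given epimorphism; this is Serre vanishing on $\bProj R$, verified directly on the cover of $\Spec R \setminus \bszero$ by the basic opens $D(f)$ for positive-degree homogeneous $f$.

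\emph{AZ-ample implies ample.} I would proceed in two steps. First, to deduce coarse ampleness, apply axioms (a) and (b) for $L$ to sheaves pulled back from $X$ via $c$ and then push forward; using $c_* \cO_{\bX} = \cO_X$ together with the exactness of $c_*$ from the tame local model of \cite{MR2427954}, one obtains the scheme-level Artin--Zhang axioms for $\cL$ over $\pibar \colon X \to B$, whence $\cL$ is $\pibar$-ample by \cite[\S 4]{MR1304753}, so $L$ is coarsely ample. Second, by \pref{pr:ampleness criterion} it suffices to show that $L$ is polarizing: that at each geometric point $x$ the character of $L_x$ generates $\Hom(\Aut x, \Gm) \simeq \bZ / r \bZ$. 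If the image were a proper subgroup $\langle \chi^d \rangle$ with $d > 1$, then in the presentation $\ld \Spec S / \bmu_r \rd$ every $L^{\otimes (-\ell)}$ would, as a $\bmu_r$-module, be supported in the $\langle \chi^d \rangle$-isotypic part. Taking $\cM$ to be the skyscraper at $x$ in an isotypic component $\chi^e$ with $e \not\in d \bZ$, no morphism from a finite direct sum of $L^{\otimes(-\ell_i)}$ can surject onto $\cM$, contradicting axiom (a). Hence $L$ is polarizing, and therefore ample.

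The main obstacle is this final polarization step: it is the one place where the stacky structure genuinely intervenes, and it requires the tame \'etale-local model in order to decompose coherent sheaves into $\bmu_r$-isotypic pieces and exhibit an explicit character-theoretic obstruction. The remaining ingredients---transfer along $c_*$ and standard graded-module manipulations---are otherwise routine, although some care is needed to reconcile the arbitrary powers of $L$ appearing in axiom (a) with multiples of $N$ after pushforward, which one handles by comparing AZ-ampleness of $L$ with that of $L^{\otimes N}$.
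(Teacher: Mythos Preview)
Your forward direction agrees with the paper's: both invoke the standard graded-module facts for $\bProj R$ recorded in \cite[Proposition~3.11 and Corollary~4.6]{MR1304753}.

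For the converse you take a genuinely different route. The paper never separates out coarse ampleness or the polarizing condition; it applies \cite[Theorem~4.5.(1)]{MR1304753} directly to conclude that $\iota^{*}\circ(\phi_L)_{*}\colon\coh\bX\to\coh\bProj R$ is an equivalence, observes that $\phi_L|_{\bX}$ must then miss the vertex $[\bszero/\Gm]$ (otherwise a non-zero sheaf would be annihilated) and hence factors through $\varphi_L$, and finishes by monoidal reconstruction \cite[Remark~5.12]{0412266}. Your Step~2---the isotypic obstruction forcing the character of $L_x$ to generate $\Hom(\Aut x,\Gm)$---is correct and, if carried out first, simultaneously shows that $\bX$ is cyclotomic and hence tame; note that you need this \emph{before} appealing to exactness of $c_{*}$ or to \pref{rm:existence of descent}, since neither hypothesis is part of the theorem.

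The genuine gap is Step~1. After pushing a surjection $\bigoplus L^{\otimes(-\ell_i)}\twoheadrightarrow c^{*}\cM$ forward along $c_{*}$ and applying the projection formula, the source becomes a sum of terms $\Lbar^{\otimes(-q_i)}\otimes c_{*}L^{\otimes(-s_i)}$ with $0\le s_i<N$ (writing $\Lbar$ for the descent of $L^{\otimes N}$ to $X$), and for $s_i>0$ the coherent sheaves $c_{*}L^{\otimes(-s_i)}$ are not powers of $\Lbar$. So axiom~(a) for $\Lbar$ on $X$ does not follow. Your proposed remedy---``comparing AZ-ampleness of $L$ with that of $L^{\otimes N}$''---cannot work as stated: $L^{\otimes N}$ is \emph{never} AZ-ample on $\bX$ once $\bX$ has a non-trivial stacky point, by exactly your own Step~2 argument (no sum of its powers surjects onto a skyscraper lying in a non-trivial $\bmu_r$-isotype), so there is no AZ-ampleness of $L^{\otimes N}$ to transfer. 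Closing this gap seems to require either an independent argument that $R=\pi_{*}\Sym(L)$ is of finite type, or in effect the categorical equivalence that the paper's proof already exploits.
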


\begin{proof}
Since the claim is local in $\basescheme$,
we may and will assume that $\basescheme$ is affine.
We set
$
R \coloneqq \pi_* \Sym(L),
$
and identify
\begin{align}
\qgr R
&\coloneqq
\gr R / \tor R \\
&\simeq
\left.
\coh \ld \Spec R \middle/ \Gm \rd
\middle/
\coh_{\ld \bszero / \Gm \rd} \ld \Spec R \middle/ \Gm \rd
\right. \\
&\simeq
\coh \ld \lb \Spec R \setminus \bszero \rb \middle/ \Gm \rd \\
&\simeq \coh \bProj R.
\end{align}

The only if part is an immediate consequence of
\cite[Proposition 3.11.(3)]{MR1304753}
and
\cite[Corollary 4.6.(2)]{MR1304753}.

In order to prove the if part,
assume that $L$ is AZ-ample.
The functor
\begin{align}
\iota^* \circ \lb \phi_L \rb_* \colon \coh \bX \to \coh \bProj R, \ 
F
\mapsto
\iota^* \lb \bigoplus_{i=0}^\infty \pi_* \lb \cF \otimes L^{\otimes i} \rb \rb
\end{align}
is an equivalence
by \cite[Theorem 4.5.(1)]{MR1304753}.
If the image of $\phi_L$ intersects
$
\ld \bszero \middle/ \Gm \rd,
$
then the push-forward of the structure sheaf of
$
\phi_L^{-1} \lb \ld \bszero \middle/ \Gm \rd \rb
\coloneqq
\ld \bszero \middle/ \Gm \rd \times_{\ld \Spec R \middle/ \Gm \rd} \bX
$
gives a non-zero object of $\coh \bX$
which goes to zero by $\iota^* \circ \lb \phi_L \rb_*$,
which contradicts the fact that $\iota^* \circ \lb \phi_L \rb_*$
is an equivalence.
It follows that
$\phi_L$ factors through $\varphi_L$
and $\iota^* \circ \lb \phi_L \rb_* \simeq \lb \varphi_L \rb_*$,
so that the functor $\varphi_L^*$,
which is left adjoint to $\lb \varphi_L \rb_*$,
is an equivalence.
Since the functor $\varphi_L^*$ is monoidal
with respect to the tensor product of coherent sheaves,
the morphism $\varphi_L$ is an isomorphism of stacks
(see \cite[Remark~5.12]{0412266}).
\end{proof}

\section{Stable orbifold projective curves} \label{sc:stable}

The following is the main definition in this paper:

\begin{definition} \label{df:stable orbifold projective curve}
An algebraic stack $ \bX $ over an algebraically closed field $\bfk$
is a \emph{stable orbifold projective curve}
if it is connected,
reduced,
and proper over $ \bfk $
of dimension one
satisfying the following conditions:
\begin{enumerate}[(1)]
\item \label{it:no generic stabilizer}
The open substack of $\bX$ consisting of points
with the trivial automorphism group
is dense.
\item \label{it:nodal}
The singularities of the coarse moduli space $X$ are at-worst-nodal.
% cf. [Stacks project, Definition 53.19.1.(2)]
\item
If a stacky point of
$
  \bX
$
is not smooth,
then the formal completion of $\bX$ at the point is isomorphic to the quotient stack
\begin{align}\label{eq:stacky singularity}
  \left[ \Spec \frac{ \bfk \db[ x, y \db] }{ ( x ^{ 2 } - y ^{ 2 } ) } \middle/ \bmu _{ 2 } \right],
\end{align}
where the coaction of
$
  \bfk[\bmu _{ 2 }] = \bfk [ t ] / ( t ^{ 2 } - 1 )
$
is given by
$
  x \mapsto x \otimes t
$
and
$
  y \mapsto y \otimes 1.
$
\item
The dualizing sheaf
$
  \omega _{ \bX }
$
is ample.
\end{enumerate}
A stable orbifold projective curve
over a scheme $ \basescheme $
is a proper flat stack
$
  \pi \colon \bX \to \basescheme
$
whose geometric fibers are stable orbifold projective curves.
\end{definition}

% \begin{remark}
% A twisted stable maps to a point
% in the sense of \cite[Definition~4.3.1]{MR1862797}
% can not have a stacky point
% because of the representability condition.
% \end{remark}

The formal completion
of a stable orbifold projective curve over an algebraically closed field
at a stacky point
is isomorphic to
$
\left[ \Spec \bfk \db[ x \db] \middle/ \bmu_r \right]
$
for some positive integer $r$
with the coaction of $\bfk[\bmu_r]$ given by
$x \mapsto x \otimes t$
if the point is smooth,
and
to \eqref{eq:stacky singularity}
if the point is singular.

\begin{remark}
If $ \bfk $ is of characteristic $ 2 $,
then the stack \eqref{eq:stacky singularity} is isomorphic to
\begin{align}
\left[
\frac{ \Spec \bfk \db[ x, y \db] }{ ( x + y ) ^{ 2 } } \bigg/ \bmu _{ 2 }
\right].
\end{align}
The coarse moduli scheme is
$ \Spec \bfk \db[ y \db] $.
One can easily confirm that the action of $ \bmu _{ 2 } $ is free
over the punctured spectrum
$
  \Spec \bfk \db[ y \db] [y^{-1}] \hookrightarrow \Spec \bfk \db[ y \db].
$
By directly computing a smooth atlas, one can also confirm that this is actually a \emph{reduced} algebraic stack.

In any characteristic,
the stacky singularity \eqref{eq:stacky singularity} admits a deformation
to a pair of smooth stacky points with
$
\bmu _{ 2 }
$
automorphism groups.
\end{remark}

\begin{definition} \label{df:moduli}
The \emph{moduli stack of stable orbifold projective curves}
$
\cMbar
$
is the following category
equipped with the obvious forgetful functor
$
\cMbar \to \left( \Sch / \basescheme \right).
$

\begin{enumerate}[(1)]
\item
An object is a stable orbifold projective curve
$
  f \colon \bX \to U
$
over a $\basescheme$-scheme
$
U.
$
\item
A morphism
\begin{align}
  \varphi = \left( \varphi _{ 1 }, \varphi _{ 2 } \right) \colon
  \left( f \colon \bX \to U \right) \to \left( g \colon \bY \to V \right)
\end{align}
consists of a morphism
$
  \varphi_2 \colon U \to V
$
of $\basescheme$-schemes and a natural isomorphism class of morphisms of stacks
$
  \varphi _{ 1 } \colon \bX \to \bY
$
such that the following diagram is cartesian:
\begin{equation}
\begin{tikzcd}
\bX \arrow[r, "\varphi _{ 1 }"] \arrow[d, "f"'] & \bY \arrow[d, "g"] \\
U \arrow[r, "\varphi _{ 2 }"'] & V
\end{tikzcd}
\end{equation}
\end{enumerate}
For $g \in \bN$ and $\bsr \in \lb \bZ^{\ge 2}\rb^n$,
the stack $\cM_{g,\bsr}$
is the substack of $\cMbar$
representing smooth orbifold projective curves with exactly $n$ stacky points
with automorphism groups $\bmu_{r_i}$
for $i=1,\ldots,n$
whose coarse moduli spaces are of genus $g$.
The stack $\cMbar_{g,\bsr}$ is the closure of $\cM_{g,\bsr}$ in $\cMbar$.
% Here we think of an orbifold singularity \eqref{eq:stacky singularity}
% as two stacky points of order $ 2 $
% overlapping over the same point of the coarse moduli space.
\end{definition}

\begin{remark}
We can consider a $ 2 $-category of stable orbifold projective curves,
where $ 2 $-morphisms are natural isomorphisms between morphisms of stable orbifold projective curves.
However, by \cite[Lemma~4.2.3]{MR1862797}
(although it is stated only for Deligne--Mumford stacks,
the proof works equally well for tame algebraic stacks by replacing `\'etale' with `smooth'
everywhere in the proof),
there is at most one natural isomorphism for each pair of morphisms of stacks,
so that the resulting $ 2 $-category is equivalent
to the $ 1 $-category defined in \pref{df:moduli}
(\cite[Proposition~4.2.2]{MR1862797}).
\end{remark}

\section{Weighted pointed stable curves} \label{sc:Hassett}

We recall the work of Hassett in this section.

\begin{definition}[{\cite{MR1957831}}]\label{df:Hassett_moduli}
The \emph{moduli stack of weighted pointed stable curves}
$
\cMbar_{g,\scrA}
$
of genus $g \in \bN$
and weight
$
\scrA = (a_i)_{i=1}^n
\in
\lb (0,1] \cap \bQ \rb^n
$
is the following category
equipped with the obvious forgetting functor
$
\cMbar_{g,\scrA} \to \lb \Sch \middle/ \basescheme \rb.
$
\begin{enumerate}[(1)]
\item
An object
$
\lb f \colon X \to U, s_1,\ldots,s_n \rb
$
consists of
\begin{itemize}
\item
a flat projective morphism
$
 f \colon X \to U
$
of
$\basescheme$-schemes, and
\item
sections
$
 s _{ 1 }, \dots, s _{ n } \colon U \to X
$
of
$
 f
$,
\end{itemize}
such that
\begin{itemize}
\item
any geometric fiber of $f$ is a connected at-worst-nodal curve
of arithmetic genus $g$,
\item
the images of the sections
$
 s _{ 1 }, \dots, s _{ n }
$
are contained in the smooth locus of $f$,
\item
$
 a _{ i _{ 1 } } + \cdots + a _{ i _{ \ell } } \le 1
$
whenever the images of
$
 s _{ i _{ 1 } }, \dots, s _{ i _{ \ell } }
$
have non-empty intersection,
and
\item
the $\bQ$-line bundle
$
 \omega _{ f } \left( a _{ 1 } s _{ 1 } + \cdots + a _{ n } s _{ n } \right)
$
is $f$-ample.
\end{itemize}
\item
A morphism
\begin{align}
 \varphi = \left( \varphi _{ 1 }, \varphi _{ 2 } \right) \colon
 \left( f \colon X \to U, s _{ 1 }, \dots, s _{ n } \right)
 \to
 \left( g \colon Y \to V, t _{ 1 }, \dots, t _{ n } \right)
\end{align}
consists of morphisms of
$
 \bfk
$-schemes
$
 \varphi_2 \colon U \to V
$
and
$
 \varphi _{ 1 } \colon X \to Y
$
such that
$
 s _{ i } \circ \varphi _{ 2 } = \varphi _{ 1 } \circ t _{ i }
$
for all
$
 i = 1, \dots, n
$
and the following diagram is cartesian.
\begin{align}
 \xymatrix{
 X \ar[r] ^{ \varphi _{ 1 } } \ar[d] _{ f } & Y \ar[d] ^{ g } \\
 U \ar[r] _{ \varphi _{ 2 } } & V}
\end{align}
\end{enumerate}
\end{definition}

\begin{remark}
The definition of $\cMbar_{g,\scrA}$ reduces to that of $\cMbar_{g,n}$
when $\scrA = (1, \ldots, 1)$.
\end{remark}

\begin{theorem}[{\cite[Theorem 2.1]{MR1957831}}] \label{th:Hassett}
The category $\cMbar_{g,\scrA}$ is a connected smooth proper Deligne--Mumford stack
over $\basescheme$.
\end{theorem}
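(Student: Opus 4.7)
The plan is to follow the approach of Hassett, bootstrapping from the classical case $\scrA = (1,\ldots,1)$ for which the statement reduces to the Deligne--Mumford theorem on $\cMbar_{g,n}$. The key geometric construction is a reduction morphism from $\cMbar_{g,n}$ to $\cMbar_{g,\scrA}$ that contracts components destabilized when the weights are lowered. To begin with, I would establish that $\cMbar_{g,\scrA}$ is an algebraic stack by combining boundedness of the polarization $\omega_f(\sum a_i s_i)$ with Artin's representability criteria, or equivalently by realizing it as a locally closed substack of a Hilbert scheme after passing to a sufficiently divisible power of the log-canonical polarization. The Deligne--Mumford property then follows once one verifies that the automorphism group of any fiber $(X, s_1, \ldots, s_n)$ is finite and reduced: ampleness of $\omega_X(a_1 s_1 + \cdots + a_n s_n)$ forces every automorphism to preserve this polarization, and the vanishing of infinitesimal automorphisms reduces to the vanishing of sections of the tangent sheaf compatible with the marked-point data, which holds by log-positivity.

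Smoothness is a local deformation-theoretic statement. For a weighted pointed stable curve $(X, s_1, \ldots, s_n)$, deformations are governed by the cotangent complex of the pair; since $X$ has only nodal singularities, which are local complete intersections, and the marked points lie in the smooth locus, the relevant obstruction space vanishes and deformations are unobstructed of the expected dimension $3g-3+n$. Properness I would prove via the valuative criterion. Given a family over the punctured spectrum of a DVR, stable reduction (after a finite base change) produces an extension as a stable $n$-pointed curve $\cC$ over the whole DVR. Running the relative log MMP for $K_{\cC} + a_1 s_1 + \cdots + a_n s_n$ contracts exactly those components and rational tails made unstable by the new weights, producing the desired extension in $\cMbar_{g,\scrA}$; uniqueness of the extension reflects uniqueness of the log-canonical model.

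Connectedness then follows because performing this relative log MMP in families yields a reduction morphism $\rho \colon \cMbar_{g,n} \to \cMbar_{g,\scrA}$ which is surjective on geometric points, and $\cMbar_{g,n}$ is well known to be connected. The main obstacle I anticipate is verifying that this reduction morphism exists as a genuine morphism of stacks: one must show that the relative MMP commutes with arbitrary base change, that the contracted family remains flat, and that coinciding marked points with weights summing to at most $1$ behave correctly under specialization. This is essentially the technical heart of Hassett's construction and deserves the most careful handling.
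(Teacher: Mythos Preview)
The paper does not give its own proof of this statement: it is quoted verbatim from Hassett with a bare citation to \cite[Theorem~2.1]{MR1957831}, and no argument appears in the present text. So there is nothing to compare against here beyond noting that your outline is essentially a faithful sketch of Hassett's original proof---algebraicity via a Hilbert-scheme construction and boundedness of the log-canonical polarization, the Deligne--Mumford property from finiteness of automorphisms, smoothness from unobstructedness of nodal-curve deformations, properness via stable reduction followed by the relative log MMP, and connectedness via surjectivity of the reduction morphism from $\cMbar_{g,n}$.

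One small caution on your outline: Hassett actually establishes the existence and basic properties of $\cMbar_{g,\scrA}$ (algebraicity, Deligne--Mumford, smoothness, properness) \emph{before} constructing the reduction morphism $\rho\colon\cMbar_{g,n}\to\cMbar_{g,\scrA}$, rather than bootstrapping the former from the latter. Your sketch leans on $\rho$ for connectedness, which is fine, but phrasing the earlier steps as ``bootstrapping from the classical case'' via $\rho$ slightly inverts the logical order; the stack-theoretic properties are proved directly for arbitrary $\scrA$, and the reduction morphism comes afterwards as an application. Your anticipated obstacle---flatness and base-change compatibility of the contracted family---is indeed the technical core of Hassett's Section~4, but it is not needed for the statement at hand.
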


\begin{remark} \label{rm:degree_of_the_log_canonical_divisor_is_positive}
The stack $\cMbar_{g,\scrA}$ is empty unless
$
 2 g - 2 + \sum _{ i = 1 } ^{ n } a _{ i } > 0.
$
\end{remark}

\begin{theorem}[{\cite[Theorem 4.1]{MR1957831}}]
If $\scrA=(a_i)_{i=1}^n$ and $\scrB=(b_i)_{i=1}^n$ satisfies
$b_i \le a_i$ for all $i \in \{ 1, \ldots, n \}$,
then there exists a birational morphism
\begin{align} \label{eq:reduction}
  \cMbar_{g,\scrA} \to \cMbar_{g,\scrB}
\end{align}
obtained by contracting components of the curve $C$
along which $\omega_C + \sum_{i=1}^n b_i s_i$ fails to be ample.
\end{theorem}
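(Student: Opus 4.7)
The plan is to construct the morphism \eqref{eq:reduction} by performing, on each fibre, the log canonical contraction associated to $\omega_f \lb \sum b_i s_i \rb$, and assembling these contractions into a family over the moduli stack. Given an object $\lb f \colon C \to U, s_1, \ldots, s_n \rb$ of $\cMbar_{g,\scrA}(U)$, consider the relative $\bQ$-line bundle $L_\scrB \coloneqq \omega_f \lb \sum_{i=1}^n b_i s_i \rb$. Since $b_i \le a_i$ and $\omega_f \lb \sum a_i s_i \rb$ is $f$-ample, $L_\scrB$ is $f$-nef on every geometric fibre, with strictly positive degree on every irreducible component except for a finite collection of smooth rational components.

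For a sufficiently divisible $m$, the $\bQ$-line bundle $L_\scrB^{\otimes m}$ becomes a genuine invertible sheaf on $C$; form the graded $\cO_U$-algebra $\cR \coloneqq \bigoplus_{k \ge 0} f_\ast L_\scrB^{\otimes k m}$. The first point is to verify that $\cR$ is a finitely generated $\cO_U$-algebra whose formation commutes with arbitrary base change on $U$. Finite generation follows from nefness of $L_\scrB$ together with Serre vanishing applied to the $f$-ample part of $L_\scrB$; base change compatibility follows from cohomology and base change once the higher direct images vanish for large $k$. Define $C' \coloneqq \Proj_U \cR$ with the induced morphism $f' \colon C' \to U$ and the natural contraction $\pi \colon C \to C'$ over $U$, and put $s'_i \coloneqq \pi \circ s_i$.

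The core step is to verify that $\lb f' \colon C' \to U, s'_1, \ldots, s'_n \rb$ satisfies the conditions of \pref{df:Hassett_moduli} with weights $\scrB$; it suffices to work over a geometric point of $U$. Combining the two-sided bounds $\lb \omega_C \otimes \cO_C\lb \sum a_i s_i \rb \rb \cdot E > 0$ and $\lb \omega_C \otimes \cO_C\lb \sum b_i s_i \rb \rb \cdot E \le 0$ on a contracted component $E$ with the adjunction formula, one shows that the contracted loci are disjoint trees of smooth rational curves, each attached to the rest of $C$ along one or two nodes and carrying only marked sections whose $\scrB$-weights sum to at most $1$. From this combinatorial classification one deduces at-worst-nodal singularities of $C'$, that the $s'_i$ lie in the smooth locus of $f'$, that the collision condition on $\scrB$-weights holds, and that the descended log canonical sheaf is $f'$-ample (the latter being built in by the $\Proj$ construction). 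This fibrewise classification and its bookkeeping is the main technical obstacle.

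Finally, functoriality of the assignment $(C, s) \mapsto (C', s')$ in $U$, guaranteed by the base change property of $\cR$, yields a $1$-morphism of stacks $\cMbar_{g,\scrA} \to \cMbar_{g,\scrB}$. Birationality is immediate: on the open substack parametrising smooth curves with pairwise disjoint sections, $L_\scrB$ is already $f$-ample, so no component is contracted and $\pi$ is an isomorphism.
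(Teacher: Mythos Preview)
The paper does not supply its own proof of this statement: it is quoted verbatim as \cite[Theorem~4.1]{MR1957831} and used as a black box, so there is no in-paper argument to compare against. Your outline is essentially Hassett's original proof (form the relative log canonical model $\Proj_U \bigoplus_k f_*\omega_f^{\otimes km}(km\sum b_i s_i)$, check base change, and analyse the contracted components fibrewise), so in that sense you are reproducing the cited source rather than diverging from it.

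One small correction to your combinatorial analysis: a contracted component $E$ cannot meet the rest of the curve in two nodes. If $E \cong \bP^1$ has $k$ nodes, then $(\omega_C + \sum b_i s_i)\cdot E = k - 2 + \sum_{s_i \in E} b_i$, so contraction forces $\sum_{s_i \in E} b_i \le 2 - k$; for $k=2$ this means $E$ carries no marked points (as each $b_i > 0$), which contradicts $\scrA$-stability of the original curve. Thus every contracted component is a rational tail attached at a single node, and the contracted locus is a disjoint union of trees each meeting the rest of $C$ in exactly one point. This sharpens, but does not invalidate, your sketch.
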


\begin{theorem}[{\cite[Corollary 4.7]{MR1957831}}] \label{th:reduction isomorphism}
Assume that for any $I = \{ i_1,\ldots,i_r \} \subset \{ 1, \ldots, n \}$
such that $a_{i_1} + \cdots + a_{i_r} > 1$
and $b_{i_1} + \cdots + b_{i_r} \le 1$,
one has $r=2$.
Then the reduction morphism \pref{eq:reduction} is an isomorphism.
\end{theorem}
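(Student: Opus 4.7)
The plan is to construct an explicit inverse to the reduction morphism \eqref{eq:reduction} under the stated hypothesis. The first observation is that under the constraint that every critical subset $I$ (i.e., one satisfying $\sum_{i \in I} a_i > 1$ and $\sum_{i \in I} b_i \le 1$) has exactly two elements, the only components contracted by the reduction are smooth rational tails carrying exactly two sections from such a critical pair. Indeed, on a smooth rational tail whose sections are indexed by $I$, the divisor $\omega_C + \sum_i a_i s_i$ has degree $-1 + \sum_{i \in I} a_i$, so the tail is $\scrA$-stable precisely when $\sum_{i \in I} a_i > 1$ and becomes $\scrB$-unstable precisely when $\sum_{i \in I} b_i \le 1$; the hypothesis forces $|I| = 2$.

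Next I would construct a candidate inverse $\cMbar_{g,\scrB} \to \cMbar_{g,\scrA}$ by a universal blow-up. Over the universal family $f \colon \cC \to \cMbar_{g,\scrB}$, for each critical pair $(i,j)$ let $Z_{ij} \subset \cC$ be the coincidence locus where the sections $s_i$ and $s_j$ agree. Each $Z_{ij}$ is contained in the smooth locus of $f$, and the hypothesis prevents any three sections from distinct critical pairs from colliding simultaneously, so the various $Z_{ij}$ are pairwise disjoint. Blowing up the disjoint union $\coprod Z_{ij}$ yields a new family $\widetilde{\cC}$ together with lifted sections whose $(i,j)$-th pair separates on each exceptional $\bP^1$. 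The ampleness of the log-canonical divisor on these new rational tails reduces to $a_i + a_j > 1$, which is exactly the critical-pair inequality, so the resulting family is $\scrA$-stable.

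Finally I would verify that the two constructions are mutually inverse and descend to morphisms of stacks. Running the reduction on a family produced by blowing up simply re-contracts the exceptional tails, and blowing up the reduction of an $\scrA$-stable family restores the tails that were contracted, both up to canonical isomorphism. The main obstacle is the family-level regularity: to ensure the blow-up commutes with base change, one needs each $Z_{ij}$ to be regularly embedded of codimension two in $\cC$, with the two sections $s_i, s_j$ meeting transversally along $Z_{ij}$. This is a local statement that follows from Hassett's smoothness result \pref{th:Hassett} together with the explicit \'etale-local structure of a weighted pointed stable curve near a permitted collision of two sections, where the two sections play the role of coordinate axes in a smooth surface and the blow-up is the standard one. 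Once this regularity is secured, mutual invertibility is a routine check on geometric points and first-order deformations.
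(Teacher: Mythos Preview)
The paper does not give its own proof of this statement: \pref{th:reduction isomorphism} is quoted verbatim from Hassett \cite[Corollary~4.7]{MR1957831} and used as a black box (only to derive \pref{cr:Hassett isomorphism}). So there is no in-paper argument to compare your proposal against.

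That said, your sketch is essentially the standard argument behind Hassett's corollary and is sound in outline. One small point worth tightening: when you assert that the loci $Z_{ij}$ are pairwise disjoint, the case of two critical pairs sharing an index (say $(i,j)$ and $(i,k)$) also needs the observation that a simultaneous collision $s_i=s_j=s_k$ would produce a critical subset of size three, contradicting the hypothesis; you state this only for ``distinct critical pairs'' and should make clear it covers overlapping pairs as well. With that and the transversality/regular-embedding check you already flag, the blow-up construction goes through and gives the inverse to the reduction morphism.
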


\pref{cr:Hassett isomorphism} below
is an immediate consequence of \pref{th:reduction isomorphism}:

\begin{corollary} \label{cr:Hassett isomorphism}
If $a_i \ge \frac{1}{2}$ for all $i \in \{ 1, \ldots, n \}$,
then the reduction morphism gives an isomorphism
$
\cMbar_{g,(1,\ldots,1)} \to \cMbar_{g,\scrA}.
$
\end{corollary}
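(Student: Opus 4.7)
The plan is to deduce this directly from \pref{th:reduction isomorphism} by verifying its combinatorial hypothesis. I would apply that theorem with the roles reversed: take $\scrA_{\mathrm{thm}} = (1,\ldots,1)$ and $\scrB_{\mathrm{thm}} = \scrA = (a_1,\ldots,a_n)$ (which satisfies $a_i \le 1$ so that $\scrB_{\mathrm{thm}} \le \scrA_{\mathrm{thm}}$ coordinate-wise), so that the reduction morphism in question is precisely \eqref{eq:reduction} in this instance.

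Next I would check the hypothesis of \pref{th:reduction isomorphism}. Suppose $I = \{ i_1, \ldots, i_r \} \subset \{ 1, \ldots, n \}$ satisfies
\begin{align}
  1 + 1 + \cdots + 1 = r > 1 \quad \text{and} \quad a_{i_1} + \cdots + a_{i_r} \le 1.
\end{align}
The first inequality, with $r$ an integer, forces $r \ge 2$. Using the assumption $a_i \ge \tfrac{1}{2}$ for every $i$, we estimate
\begin{align}
  \tfrac{r}{2} \le a_{i_1} + \cdots + a_{i_r} \le 1,
\end{align}
so $r \le 2$. Combining the two bounds yields $r = 2$, which is exactly the hypothesis required by \pref{th:reduction isomorphism}.

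Applying the theorem, the reduction morphism $\cMbar_{g,(1,\ldots,1)} \to \cMbar_{g,\scrA}$ is an isomorphism, as claimed. There is no real obstacle here: the statement is a genuine immediate consequence, and the only content is the elementary numerical check that the hypothesis $a_i \ge \tfrac{1}{2}$ forces any ``newly-colliding'' family of sections to consist of exactly two points.
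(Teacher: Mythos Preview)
Your argument is correct and is exactly the approach the paper takes: the paper states that the corollary is an immediate consequence of \pref{th:reduction isomorphism}, and what you have written is precisely the straightforward numerical verification of its hypothesis.
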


\section{Curves with weighted divisors} \label{sc:KSBA}

A variant $\cNbar_{g,\cA}$
of $\cMbar_{g,\scrA}$,
which we call the \emph{KSBA moduli of 1-dimensional stable pairs}
after Koll\'ar, Shepherd-Barron, and Alexeev,
is defined as follows:

\begin{definition}\label{df:KSBA_moduli}
The \emph{KSBA moduli stack of 1-dimensional stable pairs}
$
\cNbar_{g,\cA}
$
of genus $g \in \bN$
and weight
$
\cA = \lb \lb a_i, d_i \rb \rb_{i=1}^\ell
\in \lb \lb (0,1] \cap \bQ \rb \times \bZ^{>0} \rb^\ell
$
is the following category
equipped with the obvious forgetful functor
$
\cNbar_{g,\cA} \to \lb \Sch \middle/ \basescheme \rb.
$
\begin{enumerate}[(1)]
\item
An object
\begin{align}
  \left( f \colon X \to U, D _{ 1 }, \dots, D _{ \ell } \right)
\end{align}
consists of
\begin{itemize}
\item
a flat projective morphism
$
 f \colon X \to U
$
of $\basescheme$-schemes
with connected geometric fibers
of arithmetic genus $g$, and
\item
relative effective Cartier divisors
$
 D _{ 1 }, \dots, D _{ \ell } \subset X
$
(see e.g.~\cite[\href{https://stacks.math.columbia.edu/tag/056P}{Tag 056P}]{stacks-project}
for the notion of relative effective Cartier divisors)
of degrees
$
 d _{ 1 }, \dots, d _{ \ell },
$
\end{itemize}
such that
\begin{itemize}
\item
any geometric fiber of $f$ together with the restriction of the divisor
$
 a _{ 1 } D _{ 1 } + \cdots + a _{ \ell } D _{ \ell }
$
(which is called a \emph{weighted divisor} in \cite[Section~2.1.3]{MR1957831})
is a semi-log canonical pair of dimension one, and
\item
the $\bQ$-line bundle
$
 \omega _{ f } \left( a _{ 1 } D _{ 1 } + \cdots + a _{ \ell } D _{ \ell } \right)
$
is $f$-ample.
\end{itemize}
\item
A morphism
\begin{align}
 \varphi = \left( \varphi _{ 1 }, \varphi _{ 2 } \right) \colon
 \left( f \colon X \to U, D _{ 1 }, \dots, D _{ \ell } \right)
 \to
 \left( g \colon Y \to V, E _{ 1 }, \dots, E _{ \ell } \right)
\end{align}
consists of morphisms of
$
    \basescheme
$-schemes
$
 \varphi_2 \colon U \to V
$
and
$
 \varphi _{ 1 } \colon X \to Y
$
such that
$
 \varphi _{ 1 } ^{ * } E _{ i } = D _{ i }
$
for all
$
 i = 1, \dots, \ell
$
and the following diagram is cartesian.

\begin{align}
 \xymatrix{
 X \ar[r] ^{ \varphi _{ 1 } } \ar[d] _{ f } & Y \ar[d] ^{ g } \\
 U \ar[r] _{ \varphi _{ 2 } } & V}
\end{align}

\end{enumerate}
\end{definition}

The assumption on the singularity of the geometric fibers $F$ of $f$ is equivalent to the conditions that
$F$ has at worst nodal singularity, the coefficients of the effective divisor
$
 a _{ 1 } D _{ 1 } | _{ F }  + \cdots + a _{ \ell } D _{ \ell } | _{ F }
$
do not exceed $ 1 $, and that
$
 D _{ i }
$
are away from the singularity of $ F $.

The following claim implicitly appears in \cite{MR1957831}.

\begin{theorem} \label{th:Nbar}
There exists an isomorphism
\begin{align}
 \cNbar_{g,\cA} \simeq \ld \cMbar_{g,\scrA} \middle/ \frakS_\bsd \rd
\end{align}
of stacks where
$
 \frakS _{ \bsd } = \prod _{ i = 1 } ^{ \ell } \frakS _{ d _{ i } }
$.
\end{theorem}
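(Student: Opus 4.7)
The plan is to construct a natural functor $\Phi \colon \cMbar_{g,\scrA} \to \cNbar_{g,\cA}$, verify that it is $\frakS_\bsd$-invariant, and show that the induced functor $\bar\Phi \colon \ld \cMbar_{g,\scrA} \middle/ \frakS_\bsd \rd \to \cNbar_{g,\cA}$ is an equivalence. The functor $\Phi$ collapses sections within each weight block into a single Cartier divisor: given $(f \colon X \to U, s_1, \ldots, s_n) \in \cMbar_{g,\scrA}(U)$ with sections partitioned into blocks $s_{i,1}, \ldots, s_{i,d_i}$ of common weight $a_i$, I set $D_i \coloneqq s_{i,1}+\cdots+s_{i,d_i}$, a relative effective Cartier divisor of degree $d_i$. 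The semi-log canonical condition on $(X_u, \sum_i a_i D_i|_u)$ at any geometric point $u \in U$ reduces exactly to Hassett's coincidence constraint $\sum_{k : s_k(u)=p} a_k \le 1$ at every $p \in X_u$, while ampleness of $\omega_f(\sum_i a_i D_i) = \omega_f(\sum_{i,j} a_i s_{i,j})$ is Hassett's ampleness condition verbatim. Thus $\Phi$ is well-defined on objects and, with the evident rule on morphisms $\varphi_1^* E_i = D_i$, on morphisms as well. It is manifestly $\frakS_\bsd$-invariant, since $D_i$ depends only on the unordered multiset of sections in its block.

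To build a quasi-inverse $\Psi$, I would attach to $(f \colon X \to U, D_1, \ldots, D_\ell) \in \cNbar_{g,\cA}(U)$ the scheme
\[
P \coloneqq P_1 \times_U \cdots \times_U P_\ell, \qquad P_i \coloneqq U \times_{\operatorname{Hilb}^{d_i}(X/U)} X^{d_i}_U,
\]
where the fibre product is formed using the classifying map of $D_i$ on one side and the natural addition morphism $X^{d_i}_U \to \operatorname{Hilb}^{d_i}(X/U) \simeq \Sym^{d_i}(X/U)$ on the other. The identification $\operatorname{Hilb}^{d_i} \simeq \Sym^{d_i}$ is valid because the $D_i$ are supported in the smooth one-dimensional locus of $f$. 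The scheme $P_i$ parametrises ordered decompositions of $D_i$ into $d_i$ sections and carries the obvious $\frakS_{d_i}$-action permuting factors, so that $P$ inherits an $\frakS_\bsd$-action. The tautological ordered sections on $X_P \to P$ furnish an object of $\cMbar_{g,\scrA}(P)$, yielding the desired object of $\ld \cMbar_{g,\scrA} \middle/ \frakS_\bsd \rd (U)$. That $\bar\Phi$ and $\Psi$ are quasi-inverse reduces to two tautological observations: given ordered data on $\cMbar_{g,\scrA}$ the sections provide a canonical trivialisation of $P$, and summing the tautological sections on $P$ recovers the original divisors.

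The main obstacle is handling the loci in $U$ where some $D_i$ has non-reduced geometric fibres---equivalently, the loci where sections of $\cMbar_{g,\scrA}$ collide, which is permitted by the weight condition. Over such loci $P_i \to U$ fails to be \'etale, hence is not a classical $\frakS_{d_i}$-torsor, and one must verify that the stacky structure on $\ld P \middle/ \frakS_\bsd \rd$ correctly matches the automorphism groups of the corresponding objects of $\cNbar_{g,\cA}$. Resolving this requires an \'etale-local analysis near the diagonals of the symmetric product, exploiting the explicit structure of $\operatorname{Hilb}^{d_i}$ for smooth one-dimensional families together with the direct comparison of $\Aut_{\cNbar_{g,\cA}}(X, D_\bullet) = \{\phi \in \Aut(X) : \phi^* D_i = D_i \text{ for all } i\}$ with the automorphisms $(\sigma, \phi)$ in the quotient stack satisfying $\phi(s_i) = s_{\sigma^{-1}(i)}$. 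Away from these collision loci everything is \'etale and the argument is a direct descent computation.
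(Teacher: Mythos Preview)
The paper does not supply a proof of this statement; it simply remarks that the claim ``implicitly appears in'' Hassett's paper and moves on, so there is no argument in the paper to compare against.

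Your construction of $\Phi$ and its $\frakS_\bsd$-invariance are correct, and the proposed inverse via $P_i = U \times_{\operatorname{Hilb}^{d_i}(X/U)} X^{d_i}_U$ is the natural thing to try. You are also right that the crux is the locus where some $D_i$ has non-reduced fibres, over which $P_i \to U$ is not an $\frakS_{d_i}$-torsor.

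The difficulty, however, is more serious than your closing paragraph lets on, and an ``\'etale-local analysis near the diagonals'' will not dissolve it. Carry out the automorphism comparison you wrote down: at a geometric point where, say, $s_1 = s_2$ (allowed once $2 a_1 \le 1$), the map $(\sigma, \phi) \mapsto \phi$ from automorphisms in $\ld \cMbar_{g,\scrA} / \frakS_\bsd \rd$ to automorphisms in $\cNbar_{g,\cA}$ is surjective but has kernel equal to the stabiliser of the tuple $(s_1, \ldots, s_n)$ in $\frakS_\bsd$, here a copy of $\frakS_2$. Equivalently, $\ld X^d / \frakS_d \rd$ agrees with $\operatorname{Hilb}^d(X) \simeq \Sym^d X$ only on coarse spaces, not as stacks, along the big diagonal; over such a geometric point your $P$ is a non-reduced length-$\lvert \frakS_\bsd \rvert$ scheme rather than a torsor, and no \'etale cover of a geometric point changes that. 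So in full generality $\bar\Phi$ is only the coarse-moduli morphism over the collision locus, and the statement as literally written needs either a hypothesis excluding collisions (e.g.\ all $a_i > \tfrac{1}{2}$, in which case $P \to U$ is genuinely \'etale and your argument goes through verbatim) or a modification of one of the two sides. For the weights the paper actually uses one has $a_i \ge \tfrac{1}{2}$, and the cleanest route is to pass through the reduction isomorphism $\cMbar_{g,(1,\ldots,1)} \simto \cMbar_{g,\scrA}$ of \pref{cr:Hassett isomorphism}: on the weight-$1$ side no sections may coincide, your torsor construction works without obstruction, and the residual issue is pushed into checking that the analogous reduction is an isomorphism on the $\cNbar$ side.
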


\begin{remark}
The stack
$
 \left[ \cMbar _{ g, \scrA } / \frakS _{ \bsd } \right]
$
is defined as the \emph{stackification} of the category fibered in groupoids over
$
 \Sch / \basescheme
$
defined as follows.
An object is a triple consisting of a $\basescheme$-scheme
$
 U
$,
a principal
$
 \frakS _{ \bsd }
$
-bundle
$
    P \to U,
$
and an equivariant morphism
$
    P \to \cMbar _{ g, \scrA }
$.
A morphism between these two objects is an isomorphism of the principal
$
 \frakS _{ \bsd }
$-bundles
which is also compatible with the morphisms to
$
 \cMbar _{ g, \scrA }
$.
In general, this is not a stack before taking stackification.
\end{remark}

Although $\cNbar_{g,\cA}$ is not necessarily tame,
\pref{th:Hassett} and \pref{th:Nbar} implies the following:

\begin{corollary}
The stack $\cNbar_{g,\cA}$ is a connected smooth proper Deligne--Mumford stack
over $\basescheme$.
\end{corollary}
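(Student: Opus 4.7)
The plan is to combine \pref{th:Hassett} with \pref{th:Nbar} and verify that each of the four listed properties passes from $\cMbar_{g,\scrA}$ to the quotient stack $\ld \cMbar_{g,\scrA} \middle/ \frakS_\bsd \rd$. The crucial structural observation is that $\frakS_\bsd = \prod_{i=1}^\ell \frakS_{d_i}$ is a finite constant group over $\basescheme$, hence an \emph{\'etale} group scheme even in characteristics dividing $|\frakS_\bsd|$. Consequently the quotient morphism $q \colon \cMbar_{g,\scrA} \to \ld \cMbar_{g,\scrA} \middle/ \frakS_\bsd \rd$ is an $\frakS_\bsd$-torsor, so $q$ is finite, \'etale, and surjective.

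Given this, connectedness follows from surjectivity of $q$ together with the connectedness of $\cMbar_{g,\scrA}$. Smoothness and properness descend along $q$ by standard faithfully flat descent: smoothness is smooth-local on the source, and properness descends along faithfully flat proper morphisms, so applying these to the smooth proper morphism $\cMbar_{g,\scrA}\to\basescheme$ and the finite \'etale surjection $q$ yields that $\ld \cMbar_{g,\scrA} \middle/ \frakS_\bsd \rd\to\basescheme$ is smooth and proper.

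The main, and essentially only, subtle point is the Deligne--Mumford property, and this is exactly what is being flagged by the sentence ``Although $\cNbar_{g,\cA}$ is not necessarily tame'' preceding the statement: when $\operatorname{char}\basescheme$ divides $|\frakS_\bsd|$, the group $\frakS_\bsd$ is not linearly reductive, so the quotient cannot be tame in the sense of \cite{MR2427954}. However, being DM only requires the diagonal to be unramified, equivalently that all geometric automorphism groups be \'etale and finite. The geometric automorphism groups of $\ld \cMbar_{g,\scrA} \middle/ \frakS_\bsd \rd$ sit in an extension
\begin{align}
1 \to \Aut_{\cMbar_{g,\scrA}}(\bar{y}) \to \Aut_{\ld \cMbar_{g,\scrA} \middle/ \frakS_\bsd \rd}(\bar{y}) \to \Stab_{\frakS_\bsd}(\bar{y}) \to 1,
\end{align}
in which the left term is \'etale because $\cMbar_{g,\scrA}$ is DM by \pref{th:Hassett}, and the right term is a subgroup of the constant finite group $\frakS_\bsd$, hence \'etale. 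Therefore the middle term is \'etale and finite, giving the DM property.

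The only conceptual obstacle is thus the distinction between DM and tameness in positive characteristic; all remaining assertions reduce to routine descent properties for a finite \'etale group action on an algebraic stack.
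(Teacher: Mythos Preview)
Your argument is correct and is precisely the approach the paper has in mind: the corollary is stated immediately after noting that it is implied by \pref{th:Hassett} and \pref{th:Nbar}, and your proof simply spells out why the four properties pass to the quotient by the finite constant (hence \'etale) group $\frakS_\bsd$. The only point the paper flags---that tameness may fail while the Deligne--Mumford property survives---is exactly the distinction you make explicit via the automorphism-group extension.
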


\section{The main theorem} \label{sc:main}

Given
$
 \bsr = (r_1,\ldots,r_n) \in (\bZ^{\ge 2})^n,
$
we define the \emph{standard weight}
associated with $\bsr$ as
\begin{align}
\scrA_\bsr \coloneqq \left( a _{ 1 }, \dots, a _{ n } \right)
\coloneqq
\left( 1 - \frac{1}{ r _{ 1 } }, \dots, 1 - \frac{1}{ r _{ n } } \right),
\end{align}
which we further write as
\begin{align}
  \scrA_\bsr
  =
  \left( \overbrace{ { a }_{ 1 }, \dots, { a }_{ 1 } } ^{ d _{ 1 } },
  \overbrace{ { a }_{ 2 }, \dots, { a }_{ 2 } } ^{ d _{ 2 } },
  \dots,
  \overbrace{ { a }_{ \ell }, \dots, { a }_{ \ell } } ^{ d _{ \ell } }
  \right)
\end{align}
for pairwise distinct numbers
$
a _{ 1 }, \dots, a _{ \ell }
$
possibly after permutation,
and set
\begin{align}\label{eq:cAr}
  \cA_\bsr \coloneqq
  \lb \lb a_1, d_1 \rb,\ldots, \lb a_\ell, d_\ell \rb \rb.
\end{align}

% The main result of this paper is the following:

\begin{theorem}\label{th:main}
There exists an isomorphism
$
\cMbar_{g,\bsr}
\simeq
\cNbar_{g,\cA_\bsr}
$
of stacks over $\basescheme$.
\end{theorem}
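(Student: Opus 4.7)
The plan is to construct mutually inverse $1$-morphisms $\Phi\colon \cNbar_{g,\cA_\bsr} \to \cMbar_{g,\bsr}$ and $\Psi\colon \cMbar_{g,\bsr} \to \cNbar_{g,\cA_\bsr}$, and to verify they are inverse by local analysis.

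For $\Phi$, I would send a family $(f\colon X \to U, D_1, \dots, D_\ell)$ in $\cNbar_{g,\cA_\bsr}(U)$ to the iterated root stack $\bX := \sqrt[r_1]{D_1/X} \times_X \cdots \times_X \sqrt[r_\ell]{D_\ell/X}$, with the induced projection $\pi\colon \bX \to U$. Its coarse moduli is $X$, and $c\colon \bX \to X$ is an isomorphism away from the supports of the $D_i$. \'Etale-locally on $X$ there are three cases: at a reduced point of a single $D_i$ one recovers a smooth $\bmu_{r_i}$-stacky point $[\Spec \bfk \db[ x \db] / \bmu_{r_i}]$; at a length-$2$ fat point of a single $D_i$ --- which can only occur for $r_i = 2$, since the Hassett weight inequality forces $2(1 - 1/r_i) \le 1$ --- the computation indicated in the remark after \pref{df:stable orbifold projective curve} identifies the local picture with \pref{eq:stacky singularity}; and at a node of $X$ (through which no $D_i$ passes, by the KSBA pair condition) the stack $\bX$ is an ordinary node. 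This verifies the local conditions of \pref{df:stable orbifold projective curve}. The root-stack canonical bundle formula
\[
\omega_\bX^{\otimes m} \cong c^* \bigl(\omega_X(\textstyle\sum_i a_i D_i)\bigr)^{\otimes m}
\]
for $m$ divisible by all the $r_i$ then converts $f$-ampleness of $\omega_X(\sum_i a_i D_i)$ into coarse ampleness of $\omega_\bX$; the polarizing property at each stacky point is manifest, so $\omega_\bX$ is ample by \pref{pr:ampleness criterion}.

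For $\Psi$, given $\pi\colon \bX \to U$ in $\cMbar_{g,\bsr}(U)$ with coarse moduli $c\colon \bX \to X$, I would take $X \to U$ as the underlying family and define $D_i \subset X$ as the scheme-theoretic image of the closed substack $Z_i \subset \bX$ cut out by the ideal sheaf generated locally by $\{g \cdot s - s : g \in \bmu_{r_i},\ s \in \cO_\bX\}$ --- the scheme-theoretic fixed locus of the $\bmu_{r_i}$-action. A direct local computation shows $D_i$ is a reduced single point at a smooth $\bmu_{r_i}$-stacky point, while at the stacky singularity \pref{eq:stacky singularity} the ideal $(x)$ in $\bfk \db[ x, y \db]/(x^2 - y^2)$ has $\bmu_2$-invariants equal to $(y^2) \subset \bfk \db[ y \db]$, producing a length-$2$ fat point on the coarse space. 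Flatness of $D_i$ over $U$ is controlled by the explicit deformation model $\bigl[ \Spec \bfk \db[ x, y, t \db]/(x^2 - y^2 - t) \big/ \bmu_2 \bigr] \to \Spec \bfk \db[ t \db]$ of the remark after \pref{df:stable orbifold projective curve}, whose fixed locus $\Spec \bfk \db[ y, t \db]/(y^2 + t)$ is flat of constant relative length $2$. The remaining conditions of \pref{df:KSBA_moduli} --- the degree of $D_i$ being $d_i$, the pair singularity bound, and $f$-ampleness of $\omega_X(\sum_i a_i D_i)$ --- then follow from the same local classification and the canonical bundle formula read in reverse.

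Finally, $\Psi \circ \Phi$ and $\Phi \circ \Psi$ are canonically $2$-isomorphic to the identity: for $\Psi \circ \Phi$, the coarse moduli of a root stack is canonically the base, and the fixed-locus divisors recover the original $D_i$ by the above local computations; for $\Phi \circ \Psi$, the iterated root construction applied to $(X, D_i)$ reproduces $\bX$ because a tame cyclotomic curve with the specified isotropy types is uniquely determined by its coarse moduli together with its fixed-locus divisor data, as verified by the local classification. The main obstacle, I expect, is the verification in $\Psi$ that the fixed loci $Z_i$ yield relative Cartier divisors $D_i$ of the correct degree $d_i$ across all of $U$; the non-reduced scheme structure of $Z_i$ at a stacky node is precisely what produces the required multiplicity $2$, and confirming this in arbitrary flat families hinges on the deformation-theoretic picture of a stacky node as the limit of two smooth $\bmu_2$-stacky points colliding.
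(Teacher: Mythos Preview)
Your overall architecture matches the paper's: define $\Phi$ via the iterated root stack and $\Psi$ via the coarse moduli space together with a boundary divisor, then check they are mutually inverse by \'etale-local computation. Your construction of $\Phi$, including the case analysis (smooth $\bmu_{r_i}$-point, length-$2$ fat point forcing $r_i=2$, ordinary node) and the appeal to \pref{pr:ampleness criterion}, is correct and essentially what the paper does.

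The gap is in your construction of $\Psi$. You define $Z_i$ as ``the closed substack cut out by the ideal generated locally by $\{g\cdot s - s : g\in\bmu_{r_i},\ s\in\cO_\bX\}$'', but there is no global $\bmu_{r_i}$-action on $\bX$: the groups $\bmu_{r_i}$ arise only as automorphism groups of individual geometric points, and they act \emph{trivially} on the stalks of $\cO_\bX$ (the structure sheaf of the stack, as opposed to an atlas). Even if one reinterprets this on a local quotient presentation $[\Spec S/\bmu_r]$, the fixed locus of the subgroup $\bmu_{r_i}\subset\bmu_r$ picks up every stacky point whose order is \emph{divisible} by $r_i$, not just those of order exactly $r_i$; so if, say, $r_1=2$ and $r_2=4$, your $D_1$ would also contain the $\bmu_4$-points and have the wrong degree. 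Separating the $D_i$ by isotropy order via fixed loci is therefore not well-posed as you have written it.

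The paper sidesteps this entirely by extracting the boundary divisor from the dualizing sheaf rather than from inertia. One takes the natural inclusion $i\colon c^{\ast}\omega_X\hookrightarrow\omega_\bX$, chooses $N$ with $\omega_\bX^{\otimes N}\simeq c^{\ast}L$, and reads off an effective Cartier divisor $\Dtilde$ on $X$ from $\omega_X^{\otimes N}\hookrightarrow L$; then $D\coloneqq\frac{1}{N}\Dtilde$ is a $\bQ$-divisor with standard coefficients $1-1/r$, and the decomposition into $D_1,\dots,D_\ell$ is simply by coefficient (with the convention that a coefficient-$1$ component is split as $\tfrac12+\tfrac12$). This yields relative Cartier divisors of the correct degrees automatically, and log-crepancy of $(X,D)$ with $\bX$ makes the slc and ampleness conditions immediate --- exactly the verification you flagged as the ``main obstacle'', which the canonical-bundle approach dissolves without any deformation-theoretic analysis of fixed loci.
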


% In particular,
% the stack
% $\cMbar_{g,\bsr}$
% gives a modular compactification of
% $\cM_{g,\bsr}$.

\begin{proof}
A morphism
\begin{align} \label{eq:N to M}
F \colon \cNbar_{g,\cA_\bsr} \to \cMbar_{g,\bsr}
\end{align}
of stacks
is defined
by sending
a KSBA family
\begin{align} \label{eq:KSBA family}
  \lb f \colon X \to U, D _{ 1 }, \dots, D _{ \ell } \rb
\end{align}
of genus $g$ and weight $\cA_\bsr$
to the root stack
\begin{align}
  \bX
  \coloneqq
  X \ld \sqrt[r_{ 1 }]{D _{ 1 } }, \dots, \sqrt[r_{ \ell }]{D _{ \ell } }\rd.
\end{align}
Stability of the resulting family
follows from \pref{pr:ampleness criterion}
and a straightforward computation
at geometric points of $U$.

To define
a morphism
$
 G \colon \cMbar_{g,\bsr} \to \cNbar_{g,\cA_\bsr}
$
inverse to \pref{eq:N to M},
let
$
 \bX \to U
$
be a stable orbifold projective curve.
Take the coarse moduli scheme
$
 c \colon \bX \to X,
$
and
consider the natural morphism
\begin{align}
    i \colon c ^{ \ast } \omega _{ X } \hookrightarrow \omega _{ \bX },
\end{align}
which is the `pull-back of differential forms'.
Take $ N > 0 $ such that
$
    \omega _{ \bX } ^{ \otimes N } \simeq c ^{ \ast } L
$
for an invertible sheaf $ L $ on $ X $. Then the map
\begin{align}
    c ^{ \ast } \omega _{ X } ^{ \otimes N }
    \stackrel{ i ^{ \otimes N } }{\hookrightarrow}
    \omega _{ \bX } ^{ \otimes N } \simeq c ^{ \ast } L
\end{align}
corresponds to a map
$
    j \colon \omega _{ X } ^{ \otimes N } \hookrightarrow L
$
under the adjunction
$
    c ^{ \ast } \dashv c _{ \ast }
$
and the isomorphism
$
    c _{ \ast } \cO _{ \bX } \simeq \cO _{ X }
$.
Hence there exists an effective Cartier divisor $ \Dtilde $ on $ X $ which is supported in the smooth locus of $ X \to U $ such that there is an isomorphism
\begin{align}\label{eq:canoical bundle formula downstairs}
    L \simeq \omega _{ X } ^{ \otimes N } \lb \Dtilde \rb.
\end{align}
By applying $ c ^{ \ast } $ to \eqref{eq:canoical bundle formula downstairs},
we obtain the isomorphism
\begin{align}\label{eq:canoical bundle formula upstairs}
    \omega _{ \bX } ^{ \ast N }
    \simeq
    c ^{ \ast } \left(  \omega _{ X } ^{ \otimes N } \lb \Dtilde \rb \right)
\end{align}
on $ \bX $,
and
\begin{align} \label{eq:boundary divisor}
    D \coloneqq \frac{ 1 }{ N } \Dtilde
\end{align}
is an effective $ \bQ $-divisor on $ X $ with standard coefficients.
If there is a component $ C $ of $ D $ with coefficient $ 1 $, we think of it as the sum of two copies of
$
    \frac{ 1 }{ 2 } C
$.
By \eqref{eq:canoical bundle formula upstairs}, the pair
$
( X, D )
$
is log crepant to $ \bX $. Hence the fibers of \eqref{eq:(X,D) -> U} are semi-log canonical and the log canonical $ \bQ $-divisor $ \omega _{ X } ( D ) $ is ample over $ U $.
It follows that
\begin{align}\label{eq:(X,D) -> U}
f \colon ( X , D ) \to U
\end{align}
is a KSBA family over $ U $.

It is clear from the construction that
$F$ and $G$ are inverse to each other,
and \pref{th:main} is proved.
\end{proof}

\begin{remark}
The inverse of the reduction isomorphism
\begin{align}
\cNbar _{ g, \left( ( 1, d _{ 1 } ), \dots, ( 1, d _{ \ell } ) \right) }
\simto
\cNbar _{ g, \cA }
\end{align}
in \pref{cr:Hassett isomorphism}
gives a KSBA-family
\begin{align}\label{eq:inflated model}
\left( \ftilde \colon \Xtilde \to \cNbar_{g,\cA}, \Dtilde _{ 1 }, \dots, \Dtilde _{ \ell } \right)
\end{align}
of genus $ g $ and weight
$
\left( ( 1, d _{ 1 } ), \dots, ( 1, d _{ \ell } ) \right)
$
over $\cNbar_{g,\cA}$.
The difference between the original KSBA family and \pref{eq:inflated model}
is that a pair of points with weight $\frac{1}{2}$ can collide in the KSBA family,
which is prohibited
in the inflated family
and
$\bP^1$ with two distinct points bubbles off instead.
The reduction morphism $\Xtilde \to X$ contracts
those components.
The \emph{canonical model}
\begin{align} \label{eq:canonical model}
f_\can \colon \bXtilde _{ \can }
\coloneqq \bProj  \lb \bigoplus_{i=0}^\infty f_* \lb \omega_{\bXtilde/U}^{\otimes i} \rb \rb
\to U
\end{align}
of the relative canonical ring
of the root stack
\begin{align}
\bXtilde
\coloneqq \Xtilde \ld \sqrt[r_{ 1 }]{\Dtilde _{ 1 } }, \dots, \sqrt[r_{ \ell }]{\Dtilde _{ \ell } }\rd
\end{align}
gives a family of the stable orbifold projective curves,
whose classifying morphism
$
\cNbar_{g,\cA_\bsr} \to \cMbar_{g,\bsr}
$
gives the isomorphism \pref{eq:N to M}.
\end{remark}

\bibliographystyle{amsalpha}
\bibliography{bibs}

\def\cprime{$'$} \def\cprime{$'$}
\providecommand{\bysame}{\leavevmode\hbox to3em{\hrulefill}\thinspace}
\providecommand{\MR}{\relax\ifhmode\unskip\space\fi MR }
% \MRhref is called by the amsart/book/proc definition of \MR.
\providecommand{\MRhref}[2]{%
  \href{http://www.ams.org/mathscinet-getitem?mr=#1}{#2}
}
\providecommand{\href}[2]{#2}
\begin{thebibliography}{{Sta}18}

\bibitem[ACOU]{ACOU2}
Tarig Abdelgadir, Daniel Chan, Shinnosuke Okawa, and Kazushi Ueda, \emph{Degenerations of orbifold curves as noncommutative varieties}, in preparation.

\bibitem[AH11]{MR2779465}
Dan Abramovich and Brendan Hassett, \emph{Stable varieties with a twist}, Classification of algebraic varieties, EMS Ser. Congr. Rep., Eur. Math. Soc., Z\"{u}rich, 2011, pp.~1--38. \MR{2779465}

\bibitem[AOV08]{MR2427954}
Dan Abramovich, Martin Olsson, and Angelo Vistoli, \emph{Tame stacks in positive characteristic}, Ann. Inst. Fourier (Grenoble) \textbf{58} (2008), no.~4, 1057--1091. \MR{2427954}

\bibitem[AV02]{MR1862797}
Dan Abramovich and Angelo Vistoli, \emph{Compactifying the space of stable maps}, J. Amer. Math. Soc. \textbf{15} (2002), no.~1, 27--75 (electronic). \MR{1862797 (2002i:14030)}

\bibitem[AZ94]{MR1304753}
M.~Artin and J.~J. Zhang, \emph{Noncommutative projective schemes}, Adv. Math. \textbf{109} (1994), no.~2, 228--287. \MR{MR1304753 (96a:14004)}

\bibitem[DM69]{MR262240}
P.~Deligne and D.~Mumford, \emph{The irreducibility of the space of curves of given genus}, Inst. Hautes \'{E}tudes Sci. Publ. Math. (1969), no.~36, 75--109. \MR{262240}

\bibitem[FS13]{MR3184168}
Maksym Fedorchuk and David~Ishii Smyth, \emph{Alternate compactifications of moduli spaces of curves}, Handbook of moduli. {V}ol. {I}, Adv. Lect. Math. (ALM), vol.~24, Int. Press, Somerville, MA, 2013, pp.~331--413. \MR{3184168}

\bibitem[Has03]{MR1957831}
Brendan Hassett, \emph{Moduli spaces of weighted pointed stable curves}, Adv. Math. \textbf{173} (2003), no.~2, 316--352. \MR{1957831}

\bibitem[KM97]{MR1432041}
Se\'{a}n Keel and Shigefumi Mori, \emph{Quotients by groupoids}, Ann. of Math. (2) \textbf{145} (1997), no.~1, 193--213. \MR{1432041}

\bibitem[Lur]{0412266}
Jacob Lurie, \emph{Tannaka duality for geometric stacks}, arXiv:math/0412266.

\bibitem[RT11]{MR2819757}
Julius Ross and Richard Thomas, \emph{Weighted projective embeddings, stability of orbifolds, and constant scalar curvature {K}\"{a}hler metrics}, J. Differential Geom. \textbf{88} (2011), no.~1, 109--159. \MR{2819757}

\bibitem[{Sta}18]{stacks-project}
The {Stacks Project Authors}, \emph{\textit{Stacks Project}}, \url{https://stacks.math.columbia.edu}, 2018.

\end{thebibliography}

\end{document}